\newtheorem{thm}{Theorem}
\newtheorem{cor}[thm]{Corollary}
\newtheorem{lem}[thm]{Lemma}
\newtheorem{rem}[thm]{Remark}
\author{Peter J. Thomas\\ \small Case Western Reserve University
\\ \small Department of Mathematics \\ \small 10900 Euclid Avenue \\ \small 
Cleveland, Ohio 44106} 
\newcommand{\E}{\mathbb{E}}
\newcommand{\V}{\mathbb{V}}
\begin{document}

\title{A Lower Bound for the First Passage Time Density of the Suprathreshold Ornstein-Uhlenbeck Process}


\maketitle
\begin{abstract}
We prove that the first passage time density $\rho(t)$ for an Ornstein-Uhlenbeck process $X(t)$ obeying $dX=-\beta X\,dt + \sigma\,dW$ to reach a fixed threshold $\theta$ from a suprathreshold initial condition $x_0>\theta>0$ has a lower bound of the form $\rho(t)>k \exp\left[-p e^{6\beta t}\right]$ for positive constants $k$ and  $p$ for times $t$ exceeding some positive value $u$.  We obtain explicit expressions for $k, p$ and $u$ in terms of $\beta$, $\sigma$, $x_0$ and $\theta$, and discuss application of the results to the synchronization of periodically forced stochastic leaky integrate-and-fire model neurons.
\end{abstract}

\section{Introduction}
The distribution of the first passage time $\tau$ of an Ornstein-Uhlenbeck process to a fixed boundary is of interest in a variety of fields including mathematical neuroscience, where it describes the inter-spike interval distribution of a leaky integrate and fire (LIF) model neuron  driven by a combination of steady and fluctuating current sources \cite{CapocelliRicciardi1971Kybernetik}.   Unlike the density of the first passage time to a fixed boundary for Brownian motion, the FPT density for the OU process is not known in closed form.  Explicit expressions are known for the moments of the first passage time; in a neuroscience context the mean and variance are related respectively to the rate and variability of neuronal discharge \cite{RicciardiSato1988JApplProb,Wan+Tuckwell:1982:JTheorNeurobiol}.  Conditions guaranteeing the existence and smoothness of the FPT density to constant and moving boundaries have been studied \cite{Lehmann2002AdvApplProb,Pauwels1987JApplProb}.

Intuitively it is perhaps ``obvious" that the FPT density for an OU process is never identically zero, because
no matter how removed a time $t$ is from the typical time of first passage, there must exist a set of trajectories of positive measure that make a sufficiently large excursion from the mean behavior that arrival at the boundary is delayed until $t$ or later.  It is the purpose of this article to augment this intuitive argument by providing an explicit positive lower bound valid for the tail of the distribution.  
We restrict attention to the lower bound problem for the ``suprathreshold" case, \textit{i.e.}~the case in which the initial value $x_0$ and the asymptotic mean $x_{\infty}$ of the process are on opposite sides of the threshold value. 
In the neural context this case corresponds heuristically to that of an LIF model neuron driven by a sufficiently strong injected current that it will reach its firing threshold in finite time, \textit{i.e.}~firing is not dependent on the presence of fluctuations. This case is therefore fundamentally distinct from that of ``stochastic resonance" phenomena, which occur in a subthreshold setting.  For the subthreshold case the initial and asymptotic value of the OU process lie on the same side of the capture boundary.  In this case asymptotic results have been proven that characterize the tail of the distribution at large times  \cite{GiornoNobileRicciardi1990AdvAppPr,NobileRicciardiSacerdote1985JApplPr}.  In contrast, the author is aware of no results providing strictly positive lower bounds for the FPT density in the suprathreshold case.  Such bounds would be of interest in the study of synchronization of neural activity \textit{via} periodic stimulation, for reasons to be discussed below.  
%
%
Our main results are summarized as Theorem \ref{thm:main} and the following Remark.

\begin{thm}{Positive Lower Bound Theorem.}\label{thm:main}
Let $\{X(t),t\ge 0\}$ be an Ornstein-Uhlenbeck process satisfying the \^{I}to stochastic differential equation 
\begin{equation}
dX=-\beta X\,dt + \sigma\,dW \label{eq:dX-OUP}
\end{equation}
with initial condition
\begin{equation}
X(0)=x_0, \label{eq:X0}
\end{equation}
where $\beta$, $\sigma$ and $x_0$ are positive constants and $W(t)$ is a standard Wiener process.
Let $\theta$ be a fixed threshold satisfying $x_0>\theta>0$. 
Then there are positive constants $k, p$ and $u$ such that the first passage time density of the process to the threshold, $\rho_{X}(t)$, satisfies
\begin{equation}\label{ineq:main-result}
\rho_{X}(t)>k\exp\left[-p e^{6\beta t} \right]
\end{equation}
provided $t>u$.  
\end{thm}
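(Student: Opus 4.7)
My plan is to bound $\rho_X(t)$ below by decomposing the trajectory via the strong Markov property at a fixed intermediate time $u$ and then restricting the resulting integral to a narrow, explicit window of favorable values of $X(u)$.  Concretely, for $u\in(0,t)$ one has
\begin{equation*}
\rho_X(t) \;=\; \int_\theta^\infty q(y,u)\,\rho_y(t-u)\,dy \;\geq\; \int_L^{L+\Delta} q(y,u)\,\rho_y(t-u)\,dy,
\end{equation*}
where $q(y,u)\,dy = \pr(X(u)\in dy,\,\tau>u)$ (with $\tau$ the first passage time to $\theta$), $\rho_y$ is the FPT density for the OU process started at $y>\theta$, and $L=L(t)$ is a large level chosen so that starting the \emph{deterministic} OU dynamics from $L$ at time $u$ first meets $\theta$ precisely at time $t$, namely $L = \theta\exp(\beta(t-u))$.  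The idea is that the ``cost'' of forcing $X(u)$ up to $L$ is a Gaussian tail probability, single-exponential in $L^2$ and hence double-exponential in $t$, while from $L$ the subsequent descent to $\theta$ is nearly deterministic and should contribute only a constant factor.

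For the first factor, one fixes $u$ small enough (the parameter $u$ in the theorem statement) that $\pr(\tau>u)$ is bounded away from zero, so that $q(y,u) \approx p(y,u\mid x_0,0)$, the unrestricted Gaussian transition density.  Inserting the explicit Gaussian form gives
\begin{equation*}
\int_L^{L+\Delta} q(y,u)\,dy \;\gtrsim\; k_2\,\exp\!\left(-\frac{(L - x_0 e^{-\beta u})^2}{\sigma^2(1-e^{-2\beta u})/\beta}\right),
\end{equation*}
and substituting $L=\theta e^{\beta(t-u)}$ produces an exponent of the form $-\tilde p\,e^{2\beta(t-u)}$.  After absorbing the $u$-dependent prefactors into the constants $k,p$ and using the loose inequality $e^{2\beta(t-u)}\leq e^{6\beta t}$ to obtain a clean universal bound (the slack in the numerical constant $6$ is what allows the other estimates in the proof to be carried out crudely), one recovers the stated form $\rho_X(t)>k\exp[-p\,e^{6\beta t}]$ for $t>u$.

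The main obstacle is the second factor: establishing a uniform lower bound $\rho_y(t-u)\geq k_1>0$ for $y\in[L,L+\Delta]$ as $t$ (and hence $L$) grows.  This is essentially the statement that, starting from a very large value, the FPT density evaluated at the deterministic crossing time is bounded away from zero.  I would address this by a concentration argument: when $y\gg\theta$, the representation $X(s) = y e^{-\beta s} + \sigma\int_0^s e^{-\beta(s-r)}\,dW_r$ shows that the process is its deterministic trajectory (which crosses $\theta$ transversally with slope $-\beta\theta$ at $s=\beta^{-1}\ln(y/\theta)$) plus a fluctuation whose variance is bounded above by the stationary variance $\sigma^2/(2\beta)$.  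A short-time Brownian comparison or a direct Girsanov tube estimate over a small window around the deterministic crossing then gives a positive density for the FPT there, depending only on $\beta$, $\sigma$, and $\theta$.  The delicate point is ensuring that $k_1$ really is independent of $y$ as $y\to\infty$; the concentration improves with $y$, so I expect this to be manageable, but it is the place where most of the care is needed.
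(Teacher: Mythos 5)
Your decomposition is genuinely different from the paper's. The paper never conditions on an intermediate state of the OU process: it uses the representation $X(t)=x_0e^{-\beta t}+\tfrac{\sigma e^{-\beta t}}{\sqrt{2\beta}}B\left(e^{2\beta t}-1\right)$ to convert the problem into first passage of a standard Brownian motion to the square-root boundary $b(s)=\tfrac{\sqrt{2\beta}}{\sigma}(\theta\sqrt{s+1}-x_0)$, sandwiches that boundary between its tangent line at $s=0$ and a horizontal line through $b(s')$, and bounds below the convolution of the two \emph{explicit} linear-boundary first-passage densities; everything reduces to closed-form Gaussian computations, which is what makes $k,p,u$ explicit. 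Your route --- push $X(u)$ up to $L=\theta e^{\beta(t-u)}$ at Gaussian cost $\exp[-O(e^{2\beta(t-u)})]$ and then ride the deterministic flow down to $\theta$ --- is conceptually sound and, if completed, would actually yield a better exponent ($e^{2\beta t}$ in place of $e^{6\beta t}$), the $6$ in the theorem being an artifact of the paper's crude final estimates.

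There is, however, a genuine gap in your treatment of the second factor. You need a lower bound on $\rho_y(t-u)$, the density of the \emph{first} passage time, evaluated at the deterministic crossing time $s^*=\beta^{-1}\ln(y/\theta)\approx t-u$, uniformly as $y=L(t)\to\infty$. A ``short-time Brownian comparison or Girsanov tube estimate over a small window around the deterministic crossing'' bounds the density of \emph{a} crossing in that window; it says nothing about the event that no crossing occurred during the entire approach $[0,s^*-O(1)]$, an interval whose length grows linearly in $t$. Your concentration remark (fluctuation variance bounded by the stationary value $\sigma^2/(2\beta)$) controls each one-dimensional marginal but not the running infimum of $X(s)-\theta$ over a growing time interval, and the naive fix of confining the fluctuation to a tube of constant width costs $e^{-\lambda(t-u)}$ by recurrence of the OU noise term. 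The gap is fillable --- either accept that single-exponential cost, which is harmless against your double-exponential budget, or exploit the fact that the deterministic path sits at distance $\theta(e^{\beta r}-1)$ above $\theta$ at time $s^*-r$, so a product-form survival estimate over unit time blocks is bounded away from zero uniformly in $y$ --- but as written the proposal does not contain the needed survival estimate, and it is the crux of the argument. A smaller fixable point: $q(y,u)\approx p(y,u\mid x_0,0)$ does not follow from $\pr(\tau>u)$ being bounded below; what you need is that the bridge from $x_0$ to the very high endpoint $y=L(t)$ over $[0,u]$ avoids $\theta$ with probability bounded below, which is true but is a statement about the conditional law given the endpoint, not about the unconditional survival probability.
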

\begin{rem}\label{rem:supplement-to-main-theorem}
Given particular values of the constants $\beta, \sigma, x_0$ and $\theta$, the inequality (\ref{ineq:main-result}) is satisfied for the following values of $k, p$ and $u$:
\begin{eqnarray}
k&=&\frac{1024\beta}{9\pi}\left(\frac{x_0}{\theta}-1\right)\\
p&=&1+\frac{\beta}{32}\left(\frac{\theta}{\sigma}\right)^2\\
u&=&\frac{1}{2\beta}\log\left[1+\left\{8\vee\left(1+\frac{x_0^2}{\theta^2}\right)\vee
\left(\frac{8\sigma^2}{\beta\theta^2}\right) \right\}\right]
\end{eqnarray}
where $\vee$ denotes the maximum operator.
\end{rem}

The proof is obtained from a geometric construction, elaborated in Sections \ref{sec:doob}-\ref{sec:OUP}, that exploits the well known change of variables by which an OU process may be written in terms of a standard Brownian Motion \cite{taylorkarlin}.  Sato \cite{Sato1977JApplProb} used the same transformation to obtain analytic results on the FPT distribution of the OU process to a constant boundary, in the subthreshold case.  While our results use the same change of variables, one cannot obtain the explicit lower bound sought directly from Sato's asymptotic results.  

The plan of the paper is as follows: in Section \ref{sec:doob} we introduce the needed change of coordinates and establish a lower bound for the first passage time of a standard Brownian motion to a piecewise linear approximation of a square root boundary.  In Section \ref{sec:OUP} we use these results to establish Theorem \ref{thm:main}.  In Section \ref{sec:discuss} we discuss potential applications of our result to synchronization of periodically forced leaky integrate and fire model neurons in the presence of additive noise, as well as related asymptotic results in the existing literature.

\section{Bounding the FPT distribution to a square root boundary for Brownian motion} \label{sec:doob}
\subsection{Coordinate Transformation}
Let $X(t)$ be an OU process described by Equations (\ref{eq:dX-OUP}-\ref{eq:X0}), and let $B(t)$ denote a standard Brownian motion, \textit{i.e.}~$B(t)$ satisfies 

\begin{eqnarray}
dB&=&dW \label{eq:dB}\\
B(0)&\equiv& 0,\label{eq:B0}
\end{eqnarray}
from which $\E[B]= 0$ and $\V[B]=t$.  
We will identify the sample space $\Omega$ for Brownian trajectories with $C[0,\infty)$.

For $t\ge 0$ we have the representation of $X$ in terms of $B$:
\begin{equation}\label{eq:OUP-Brownian-rep}
X(t)=x_0e^{-\beta t}+\frac{\sigma e^{-\beta t}}{\sqrt{2\beta}}B\left(e^{2\beta t}-1\right).
\end{equation}

The problem of finding the first passage time distribution for an OU process with a fixed boundary is equivalent to the problem of finding the FPT distribution for a standard Brownian motion to a moving boundary with square root time dependence. 
Given the parameter $\beta$ for the OUP, Equation (\ref{eq:dX-OUP}), define an exponentially rescaled time 
\begin{equation}\label{eq:s-from-t}
s=e^{2\beta t}-1, t\ge 0.
\end{equation}
Equivalently, for $s\ge 0$, we have $t=\left(\frac{1}{2\beta}\right)\log(s+1)$.

\begin{lem}\label{lem:Doob-Trans}
Let $X$ and $B$ be as defined in Equations (\ref{eq:dB}-\ref{eq:X0}), with $x_0>\theta>0$, and let $s$ be as defined in Equation (\ref{eq:s-from-t}).  Fix a particular Wiener process trajectory $\omega\in\Omega$.
Then $X(t,\omega)>\theta$ for $0\le t < \tau$ if and only if 
$$B(s,\omega)> \sqrt{\frac{2\beta}{\sigma^2}}\left(\theta \sqrt{s+1} - x_0\right)$$
for all $0\le s < e^{2\beta \tau}-1$.
\end{lem}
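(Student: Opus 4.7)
The plan is to prove the lemma by direct algebraic manipulation of the representation (\ref{eq:OUP-Brownian-rep}), since the map $t \mapsto s = e^{2\beta t}-1$ is a strictly increasing continuous bijection from $[0,\infty)$ onto $[0,\infty)$ and the transformation between $X$ and $B$ is pathwise (for the fixed trajectory $\omega$). So the biconditional for $0 \le t < \tau$ will follow immediately once the pointwise equivalence $X(t,\omega) > \theta \iff B(s,\omega) > \sqrt{2\beta/\sigma^2}(\theta\sqrt{s+1}-x_0)$ is established, because the half-open interval $[0,\tau)$ maps bijectively onto $[0,e^{2\beta\tau}-1)$.

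The pointwise step is the heart of the argument and it is just bookkeeping. First, I would substitute $e^{2\beta t} - 1 = s$ into (\ref{eq:OUP-Brownian-rep}), so that $e^{-\beta t} = 1/\sqrt{s+1}$, yielding
\begin{equation*}
X(t,\omega) = \frac{x_0}{\sqrt{s+1}} + \frac{\sigma}{\sqrt{2\beta}\sqrt{s+1}}\, B(s,\omega).
\end{equation*}
Second, I would write the event $X(t,\omega) > \theta$ as
\begin{equation*}
\frac{x_0 + \frac{\sigma}{\sqrt{2\beta}} B(s,\omega)}{\sqrt{s+1}} > \theta,
\end{equation*}
and since $\sqrt{s+1} > 0$ I can multiply through without flipping the inequality. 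Rearranging gives
\begin{equation*}
\frac{\sigma}{\sqrt{2\beta}}\, B(s,\omega) > \theta \sqrt{s+1} - x_0,
\end{equation*}
and multiplying both sides by the positive constant $\sqrt{2\beta}/\sigma = \sqrt{2\beta/\sigma^2}$ yields exactly the claimed inequality. Every step is reversible, so the equivalence is in fact an iff.

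There is no real obstacle here; the only subtlety worth flagging is that one must verify the endpoint behavior of the change of variables so that ``for all $0 \le t < \tau$'' transfers cleanly to ``for all $0 \le s < e^{2\beta\tau}-1$'', but this is handled by the monotonicity and continuity of $t \mapsto e^{2\beta t}-1$. After stating these two observations (pointwise algebraic equivalence, plus the bijection between the two time intervals), the proof is complete.
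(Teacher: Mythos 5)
Your proposal is correct and follows essentially the same route as the paper: the paper's proof is precisely the chain of reversible algebraic equivalences you describe, starting from the representation (\ref{eq:OUP-Brownian-rep}) and using $e^{\beta t}=\sqrt{s+1}$, with the transfer of quantifiers between $[0,\tau)$ and $[0,e^{2\beta\tau}-1)$ handled implicitly by the monotone bijection $t\mapsto e^{2\beta t}-1$. Your explicit remark about the endpoint correspondence is a minor (and harmless) addition of care beyond what the paper writes out.
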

\begin{proof}[Proof of Lemma \ref{lem:Doob-Trans}.]  
From Equation (\ref{eq:OUP-Brownian-rep}),
\begin{eqnarray}
&X(t)> \theta \label{ineq:X>theta}
&\mbox{ for } 0\le t < \tau \\
\iff &   x_0e^{-\beta t}+\frac{\sigma e^{-\beta t}}{\sqrt{2\beta}}B\left(e^{2\beta t}-1\right) > \theta 
&\mbox{ for } 0\le t < \tau \\
\iff&  B\left(e^{2\beta t}-1\right)>\sqrt{\frac{2\beta}{\sigma^2}} \left(\theta-x_0e^{-\beta t}\right)e^{\beta t}
&\mbox{ for } 0\le t < \tau \\
\iff&B(s)>\sqrt{\frac{2\beta}{\sigma^2}}\left(e^{\beta t}\theta-x_0\right) &\mbox{ for } 0\le s < e^{2\beta\tau}-1\\
\iff & B(s)>\sqrt{\frac{2\beta}{\sigma^2}}\left(\theta\sqrt{s+1}-x_0\right)&\mbox{ for } 0\le s < e^{2\beta\tau}-1.\label{ineq:B>b}
\end{eqnarray}
This concludes the proof of Lemma \ref{lem:Doob-Trans}.
\end{proof}
Note that when $s=t=0$, we have $B(0)\equiv 0>\sqrt{\frac{2\beta}{\sigma^2}}(\theta-x_0)$, consistent with the assumption that $x_0>\theta$.   

Let $t'$ be the time of first passage of $X(t)$ to $\theta$.  Let $s'$ be the time of first passage of $B(s)$ to the boundary 
\begin{equation}\label{eq:bdyfcn}
b(s)=\frac{\sqrt{2\beta}}{\sigma}\left(\theta \sqrt{s+1} - x_0\right).
\end{equation}
The following corollary to Lemma \ref{lem:Doob-Trans} is immediate.
\begin{cor}\label{cor:doob-density}
Let the processes $X$, $B$ be defined as in Lemma \ref{lem:Doob-Trans}.  Let $b$ be the boundary function given by Equation (\ref{eq:bdyfcn}) and let $s$ be the rescaled time given by Equation (\ref{eq:s-from-t}).  Then as $dt\to 0$ the first passage time densities of $X$ and $B$ satisfy 
$$\Pr(t'\in[t,t+dt))=\Pr(s'\in[s,s+ds))$$
with
$$ds = \left(\frac{ds}{dt}\right)\,dt = 2\beta e^{2\beta t}\,dt = 2\beta(s+1)\,dt.$$
\end{cor}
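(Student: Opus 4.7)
The plan is to view Corollary \ref{cor:doob-density} as a pathwise deterministic change of variables applied to the equivalence already established in Lemma \ref{lem:Doob-Trans}. First I would read the lemma as a statement about each sample path $\omega$ individually: $X(\cdot,\omega)$ stays above $\theta$ on $[0,\tau)$ if and only if $B(\cdot,\omega)$ stays above $b$ on $[0,e^{2\beta\tau}-1)$. Taking $\tau$ to be the infimal such time on the $X$-side gives the pathwise identity $s'(\omega) = e^{2\beta t'(\omega)}-1$ between the two first passage times.

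Next, since the map $\phi:t\mapsto e^{2\beta t}-1$ is a smooth, strictly increasing bijection of $[0,\infty)$ onto itself, the distribution of $s'$ is simply the push-forward of the distribution of $t'$ under $\phi$. Specializing to an infinitesimal interval $A = [t,t+dt)$, with image $\phi(A) = [s,s+ds)$ where $s = e^{2\beta t}-1$, yields $\Pr(s'\in [s,s+ds)) = \Pr(t'\in [t,t+dt))$, and differentiating $\phi$ gives the Jacobian $ds/dt = 2\beta e^{2\beta t} = 2\beta(s+1)$, which is exactly what is claimed.

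The only real obstacle is bookkeeping: one needs to verify that the infimum defining $t'$ transports faithfully through $\phi$ to the infimum defining $s'$, so that the two hitting times coincide as random variables and not merely on a sub-event, and that the first passage time densities of $X$ and of $B$ to the square root boundary both exist as genuine absolutely continuous densities, so that the shorthand $\Pr(t'\in [t,t+dt))$ has meaning as $dt\to 0$. The former is immediate from the strict monotonicity and continuity of $\phi$ together with the definition of first hitting time as an infimum over an open condition. The latter is covered by the classical regularity theory for first passage to sufficiently smooth boundaries for Brownian motion, as referenced earlier via \cite{Lehmann2002AdvApplProb,Pauwels1987JApplProb}, and in any case the identification of measures is all that is required for the sequel.
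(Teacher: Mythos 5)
Your argument is correct and matches the paper's (implicit) reasoning: the paper simply declares the corollary ``immediate'' from Lemma \ref{lem:Doob-Trans}, and what you have written out --- the pathwise identity $s' = e^{2\beta t'}-1$, the push-forward under the strictly increasing bijection $t\mapsto e^{2\beta t}-1$, and the Jacobian $ds/dt = 2\beta e^{2\beta t} = 2\beta(s+1)$ --- is exactly the content being taken for granted. No discrepancy to report.
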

Consequently $\rho_X(t)$, the first passage time density for $X$, can be obtained from the first passage time density for $B$, $\rho_B(s)$, as
\begin{equation}
\rho_X(t)=2\beta(s+1)\rho_B(s) = 2\beta e^{2\beta t}\rho_B\left(e^{2\beta t}-1 \right).
\end{equation}   
An explicit positive lower bound for $\rho_B$ will then provide a positive lower bound for $\rho_X$. 

\subsection{Piecewise Linear Approximation for $b(s)$}
In order to obtain a positive lower bound for $\rho_B$ we will need the slope $b'(s)$ of the boundary function $b(s)$:
\begin{equation}
b'(s)=\left(\frac{\sqrt{2\beta}}{\sigma}\right)\frac{\theta}{2\sqrt{s+1}} = \frac{\theta}{\sigma}\sqrt{\frac{\beta}{2}}\frac{1}{\sqrt{s+1}}.
\end{equation}
In particular, we have $b'(0)=\theta \left(\sqrt{\beta/2}\right)/\sigma>0$ and for all $s\ge 0$, $b'(s)>0$.  

Given $s'>0$ and a small time $ds>0$ let $\Omega_0\subset \Omega$ be the set of all trajectories in the sample space $\Omega=C[0,\infty)$ with first passage time $s'$ to the boundary $b(s)$ in the small interval $s'\in[s,s+ds)$.  We will decompose the set $\Omega_0$ into two subsets and obtain a positive lower bound for the measure of one of them.  First we construct a piecewise linear approximation of the boundary function $b$ (see Figure \ref{fig:piecewiselinear}).

Let $L_1$ be the half line $\{(s,a_1+s b_1) |s\ge 0\}$, where 
\begin{eqnarray}\label{eq:a1}
a_1&=&b(0)=(\theta-x_0)\sqrt{2\beta}/\sigma<0\\
\label{eq:b1}
b_1&=&b'(0)=(\theta/2)\sqrt{2\beta}/\sigma>0.
\end{eqnarray}   
The line $L_1$ is tangent to the boundary function $b(s)$ at $s=0$.  The boundary function has negative second derivative for all $s>0$, so  we have $a_1+sb_1>b(s)$ for all $s>0$.  Consequently every continuous trajectory $B(s)$ that begins at the initial condition $B(0)\equiv 0$ and meets the boundary $b(s)$ at some time $s'>0$ must first meet the line $L_1$ at some time $s''<s'$.   

Let $s'>0$ be given.  We define a second boundary line $L_2$ as the horizontal line with height 
\begin{equation}\label{eq:a2}
a_2=b(s')=\frac{\sqrt{2\beta}}{\sigma}\left(\theta \sqrt{s'+1} - x_0\right)
\end{equation} Lines $L_1$ and $L_2$ intersect at the point 
\begin{equation}\label{eq:s*}
s_*=\frac{a_2-a_1}{b_1}=2\left(\sqrt{s'+1}-1\right).
\end{equation}    
Note $s_*<s'$ provided $s'>0$.  The duration $\Delta$ of the interval $I=[s_*,s']$ is 
\begin{equation}
\Delta=s'-s*=s'+2(1-\sqrt{s'+1}) \label{eq:Delta}
\end{equation}
and satisfies $\Delta+2\sqrt{\Delta}=s'$. 
Figure \ref{fig:piecewiselinear} illustrates the geometry of $b(s), L_1$ and $L_2$.  
We note for future reference that when $s'=8$, $s_*=\Delta=4$, and that $s*$ is a  strictly increasing function of $s'$ for $s'\ge 0$. 
\begin{figure}[htbp] 
   \centering
   \includegraphics[width=6in]{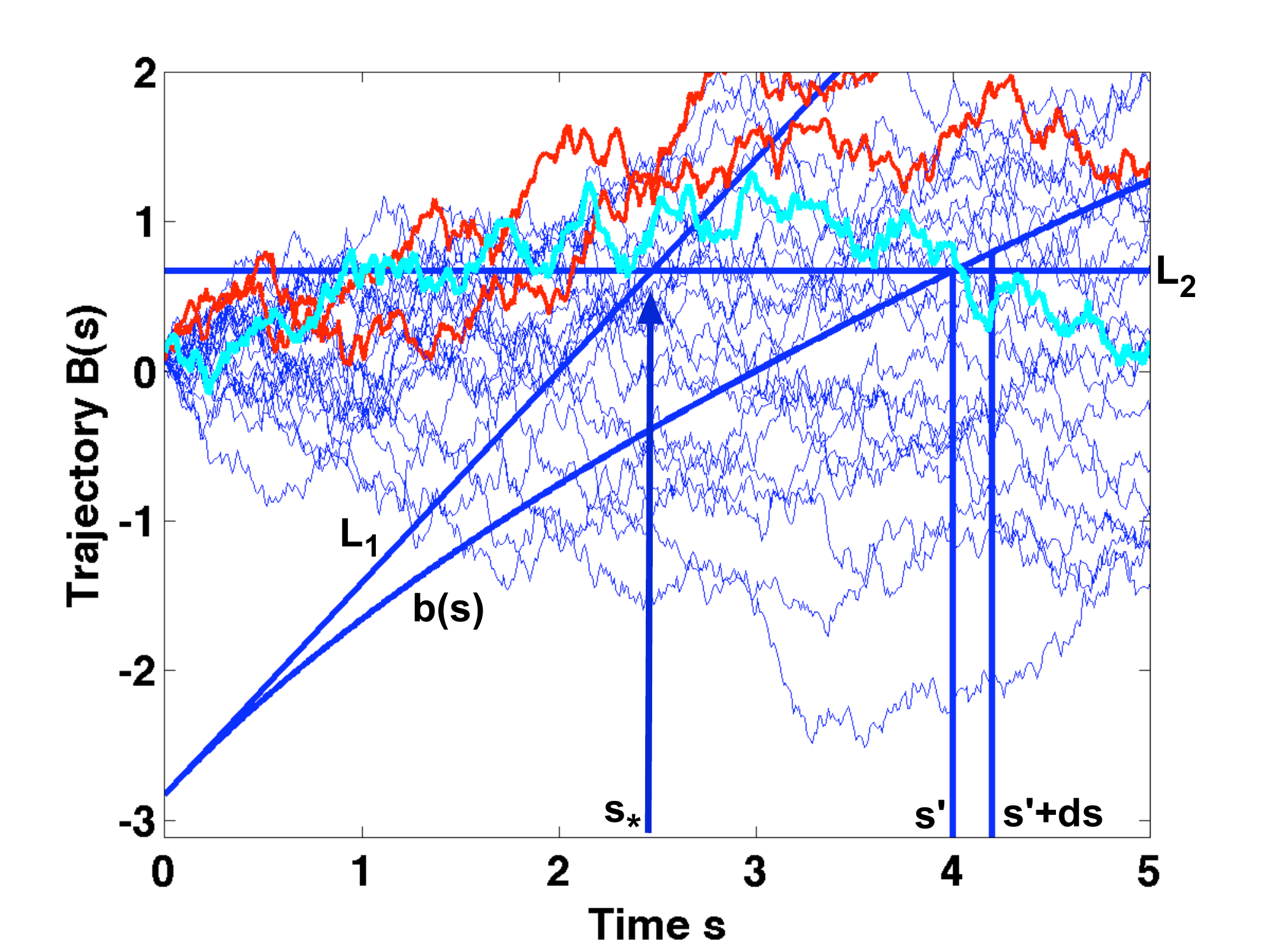} 
   \caption{Geometric construction of the piecewise linear boundary approximation, and sample trajectories.    Here the boundary $b(s)$ is the arc starting from $b(0)\approx -2.8$, and is given by Equation (\ref{eq:bdyfcn}) with $\sigma=.5, \beta=1, x_0=2$ and $\theta=1$.  The line $L_1$ begins at the same location and rises above the arc with slope $b_1=\sqrt{2}$.  Line $L_2$ is horizontal, intersecting line $L_1$ at $s_*=2\sqrt{5}-1\approx 2.47$ and the boundary $b(s)$ at $s'=4$.  
Approximate Brownian trajectories were generated using Matlab beginning at $B(0)\equiv 0$ and evolving with noise factor $\sigma=.5$. Of the twenty-five trajectories shown, three (thicker lines, shown in red and cyan) cross $L_1$ for the first time at $s''>s_*$. One of these trajectories (thickest line, shown in cyan) then crosses $L_2$ between in the interval $[s',s'+ds)$; here $ds=0.2$ for illustration.  Because $L_2$ lies below $b(s)$ for $s>s'$,  the cyan trajectory must also have its first passage to $b(s)$ within the interval $[s',s'+ds)$.  The cyan trajectory is a typical element of $\Omega_2$.}
   \label{fig:piecewiselinear}
\end{figure}

Given $ds>0$, we have defined $\Omega_0$ to be the collection of all Wiener process trajectories $\omega\in \Omega=C[0,\infty)$ such that the process $X(s,\omega)$ makes its first passage from $X(0)\equiv 0$ to $b(s)$ in the interval $[s',s'+ds)$. 
We further define $\Omega_1$ to be the subset of $\Omega_0$ consisting of those trajectories in $\Omega_0$ that make their first passage from $X(0)\equiv 0$ to the line $L_1$ at any time $s''\ge s_*$.  Figure \ref{fig:piecewiselinear} shows an example of such a trajectory highlighted in cyan (light blue).  Starting from $X(0)=0$ the cyan trajectory remains above the line $L_1$ until passing the intersection of $L_1$ and $L_2$ at $s=s_*$, after which it crosses $b(s)$ for the first time in the interval $[s',s'+ds)$.  Of the trajectories in $\Omega_1$, some -- like the trajectory shown in cyan -- also cross the line $L_2$ somewhere in the interval $[s',s'+ds)$.  Denote the set of trajectories in $\Omega_1$ with this property by $\Omega_2$.  

Clearly, $\Omega_2\subset\Omega_1\subset\Omega_0$.  Because $b(s)$ is continuous and strictly increasing, every trajectory that remains above line $L_1$ until at least $s=s_*$ and then intersects line $L_2$ for the first time in the interval $[s',s'+ds)$ must also make its first passage to $b(s)$ in the interval $[s',s'+ds)$.  
Hence if the density of trajectories in $\Omega_2$ crossing $L_2$ in $[s',s'+ds)$ is strictly positive, so is the first passage time density of all trajectories in $\Omega_0$,
which is the FPT density of interest. We next derive an explicit positive lower bound on the FPT density of trajectories in $\Omega_2$, hence also a lower bound for the FPT density of trajectories in $\Omega_0$.  

\subsection{Strictly positive lower bound for $\rho_B$}

The first passage time density for a standard Brownian motion from  $B(0)\equiv 0$ to a boundary moving with constant velocity is well known (\textit{e.g.}~\cite{cox+miller:1965}, p.~221; \cite{loader+deely:1987:JStatCompSim}). 
The density of first passage from $B(0)\equiv 0$ to $L_1$ at time $s$ is 
\begin{equation}
g_{01}(s)=\frac{|a_1|}{(2\pi s^3)^{1/2}}\exp\left(-\frac{(a_1+b_1s)^2}{2s}\right)
\end{equation}
where $a_1$ and $b_1$ are given by Equations (\ref{eq:a1}-\ref{eq:b1}).
Similarly, the density of first passage from a starting position $a_1+b_1s$ at time $s$ to 
the line $L_2$ at time $s'>s$ is given by 
\begin{equation}
g_{12}(s'|s)=\frac{|(a_1+b_1s)-a_2|}{(2\pi (s'-s)^3)^{1/2}}\exp\left(-\frac{((a_1+b_1s)-a_2)^2}{2(s'-s)}\right).
\end{equation}
Because of the almost sure continuity of Brownian trajectories,
we may write the first passage time density $g_2(s')$ for trajectories in $\Omega_2$ from initial position $B(0)\equiv 0$ to line $L_2$ at time $s'$ 
as a convolution of $g_{01}$ with $g_{12}$.  Using $l_1(s)$ to denote $(a_1+b_1s)$, we write
\begin{eqnarray}
g_2(s') &=& \int_{s=s_*}^{s'}g_{01}(s)g_{12}(s'|s)\,ds\\
\label{eq:threefactors}
&=& \frac{|a_1|}{2\pi}\int_{s_*}^{s'}\frac{l_1(s)-a_2}{((s'-s)s)^{3/2}}
\exp\left(-\frac{l_1(s)^2}{2s}-\frac{(l_1(s)-a_2)^2}{2(s'-s)}\right)\,ds \label{eq:g2conv}
\end{eqnarray}
where the upper end point ($s'$) and lower end point ($s_*$) for the integration both depend on $s'$, as does the height of $L_2$, namely $a_2=b(s')$. 
The integrand in Equation (\ref{eq:g2conv}) approaches zero at both endpoints, but is stricly positive in the interior of the interval $I=\{s|s_*\le s\le s'\}$. To obtain a positive lower bound for the integral we rewrite the integrand as a product of three factors:
\begin{eqnarray}\label{eq:g2-def}
g_2(s')&=&\frac{|a_1|}{2\pi}\int_{s\in I} F_1(s,s')F_2(s,s')F_3(s,s')\,ds\\
F_1(s,s')&=&a_1-a_2(s')+b_1s\\
F_2(s,s')&=&(s(s'-s))^{-3/2}\\
F_3(s,s')&=&\exp\left(-\frac{(s'-s)l_1^2(s)+s(l_1(s)-a_2)^2}{2s(s'-s)}\right)
=\exp\left(\frac{-\beta}{4\sigma^2}\frac{Q_1(s,s')}{Q_2(s,s')}\right)
\end{eqnarray}
where we introduce the notation
\begin{eqnarray} \label{eq:Q1}
Q_1(s,s')&=&\left(s'l_1^2(s)-2sl_1(s)a_2+sa_2^2\right)\left(2\sigma^2/\beta\right)\\ 
&=& s^2\theta(4x_0 + \theta(s'-4\sqrt{1+s'})) +4s'(x_0-\theta)^2 
\\ \nonumber
&&-4s(x_0^2+(s'-2)x_0\theta+(-1-2s'+2\sqrt{1+s'})\theta^2)\\
Q_2(s,s')&=&s(s'-s).
\end{eqnarray}
By construction, $F_1(s_*,s')\equiv 0$ for all $s'>0$ (recalling that $s_*$ depends on $s'$) and $F_1(s,s')>0$ for $s>s_*$.  On the other hand,  $F_3(s,s')\to 0$ as $s\to s'$.  Thus the integrand approaches zero at both ends of the interval of integration.  In order to obtain a strictly positive lower bound on the integral we restrict integration to a subinterval of $[s_*,s']$.
Let $\eta,\nu$ be chosen in the interval $0<\eta<\nu<1$ and set
\begin{eqnarray}
s_1&=&s_* + \eta \Delta \\
s_2&=& s_* + \nu \Delta 
\end{eqnarray}
where as above $\Delta=(s'-s_*)$ is the duration of the integration interval.
Then $s_*<s_1<s_2<s'$.  Let $I'$ denote the interval $I'=[s_1,s_2]\subset I$. Each factor $F_1, F_2, F_3$ is nonnegative on the interval $I$.  Consequently we have the inequality
\begin{equation}
g_2(s')\ge \frac{|a_1|}{2\pi}\int_{s\in I'}F_1(s,s')F_2(s,s')F_3(s,s')\,ds.
\end{equation}
We consider each factor in turn.   

Factor $F_1(s,s')$ is strictly increasing in $s$ for all $s, s'$ so for all $s\in I'$ we have 
\begin{equation}
F_1(s,s')\ge F_1(s_1,s')=\eta \frac{\theta\sqrt{2\beta}}{2\sigma} \Delta.
\end{equation}

The quadratic $Q_2(s,s')$, which also appears in factor $F_2(s,s')=Q_2(s,s')^{-3/2}$, has a global maximum (in $s$) of $(s')^2/4$ at $s=s'/2$.  In the range $s\in[s_1,s_2], Q_2(s,s')$ is strictly positive.  Consequently for all $s\in I'$ we have
\begin{equation}
F_2(s,s')\ge F_2(s'/2,s')=\frac{8}{(s')^3}.
\end{equation}

From Equation (\ref{eq:threefactors}) it is clear that both $Q_1(s,s')$ and $Q_2(s,s')$ are strictly positive for $s\in I'$.  The factors $\beta$ and $\sigma$ are also positive, therefore for $s\in I'$,
\begin{equation}
F_3(s,s')\ge\exp\left(-\frac{\beta}{4\sigma^2} \frac{\max_{I'}Q_1(s,s')}{\min_{I'}Q_2(s,s')} \right).
\end{equation}
As established in the next Lemma, $Q_1$ is just a quadratic function of $s$ with second derivative that is guaranteed to be positive, provided $s'>8$.  

\begin{lem}\label{lem:Q1concaveup}
If $s'>8$ and $x_0>\theta>0$ then the quadratic function $Q_1(s,s')$ defined by Equation (\ref{eq:Q1}) has positive second derivative with respect to $s$.
\end{lem}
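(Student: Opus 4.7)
The plan is to exploit the fact that $Q_1(s,s')$ is manifestly a quadratic polynomial in $s$, so $\partial^2 Q_1/\partial s^2$ is simply twice the coefficient of $s^2$, independent of $s$. Reading this coefficient directly off the expansion in Equation (\ref{eq:Q1}), we obtain
\begin{equation*}
\frac{\partial^2 Q_1}{\partial s^2}(s,s') = 2\theta\bigl(4x_0 + \theta(s' - 4\sqrt{1+s'})\bigr).
\end{equation*}
Since $\theta > 0$ by hypothesis, proving the lemma reduces to verifying the scalar inequality
\begin{equation*}
4x_0 > \theta\bigl(4\sqrt{1+s'} - s'\bigr)
\end{equation*}
whenever $s' > 8$ and $x_0 > \theta > 0$.

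My next step would be to analyze the auxiliary one-variable function $h(s') := 4\sqrt{1+s'} - s'$ on $[0,\infty)$. A direct computation gives $h'(s') = 2/\sqrt{1+s'} - 1$, which vanishes at $s' = 3$ and is strictly negative for $s' > 3$; hence $h$ is strictly decreasing on $[3,\infty)$. Evaluating at the cutoff gives $h(8) = 4\sqrt{9} - 8 = 4$, so for every $s' > 8$ we obtain $h(s') < 4$.

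Combining these observations yields $\theta h(s') < 4\theta < 4x_0$, where the second strict inequality uses the hypothesis $x_0 > \theta$. Rearranging gives $4x_0 - \theta h(s') > 0$, which is precisely the positivity of the coefficient of $s^2$ in $Q_1(s,s')$. Multiplying by the positive factor $2\theta$ then gives $\partial^2 Q_1/\partial s^2 > 0$ throughout the range of interest, as required.

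I do not anticipate a genuine obstacle in executing this plan: the lemma is elementary once the quadratic structure of $Q_1$ in $s$ is noticed, and the only conceptual point worth flagging is that the cutoff $s' > 8$ in the hypothesis is tuned exactly so that $h(s') < 4$, at which point the hypothesis $x_0 > \theta$ takes over to close the inequality. This explains, in hindsight, the appearance of the particular bound $s' > 8$ (rather than some other positive constant) in the statement of the lemma, and matches the corresponding threshold used earlier for $u$ in Remark~\ref{rem:supplement-to-main-theorem}.
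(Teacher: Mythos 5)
Your proposal is correct and follows essentially the same route as the paper: both identify the constant second derivative $2\theta\bigl(4x_0+\theta(s'-4\sqrt{1+s'})\bigr)$ and reduce to the scalar inequality $4\sqrt{1+s'}-s'<4$ for $s'>8$, then close with $x_0>\theta$. The only cosmetic difference is that you verify that inequality by a monotonicity argument on $h(s')=4\sqrt{1+s'}-s'$, whereas the paper squares $4+s'>4\sqrt{1+s'}$ directly.
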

\begin{proof}
The second derivative of $Q_1$ with respect to $s$ is  constant in $s$, 
$\partial^2 Q_1/\partial s^2=2\theta(4x_0+\theta(s'-4\sqrt{1+s'}))$.   Suppose $s'>8$.  Then $(s')^2>8s'$, so $(4+s')^2>16+16s'$.  Taking square roots, we obtain $4+s'>4\sqrt{1+s'}$, which means that $1>\sqrt{1+s'}-s'/4$, and since $\theta>0$ by assumption, $\theta>\theta(\sqrt{1+s'}-s'/4)$.  By the assumption on $x_0$, $x_0>\theta>\theta(\sqrt{1+s'}-s'/4)$.  Consequently 
$x_0+\theta(s'/4-\sqrt{1+s'})>0$, from which the conclusion follows.  
\end{proof}

Because $Q_1(s,s')$ has constant positive second derivative with respect to $s$, it achieves its maximum on $I'$ either at $s_1$ or at $s_2$.  
The quadratic $Q_2(s,s')$ has constant negative second derivative with respect to $s$ and achieves its minimum at one of the end points as well.    Therefore we may conclude that\footnote{For compactness  we henceforth write $Q_i(s)$ for $Q_i(s,s')$, leaving the dependence on $s'$ implicit.}
\begin{equation}
F_3(s)\ge\exp\left(-\frac{\beta}{4\sigma^2} \left[\frac{Q_1(s_1)\vee Q_1(s_2)}{Q_2(s_1)\wedge Q_2(s_2)}\right] \right)
\end{equation}
where $\vee$ denotes the greater and $\wedge$ the lesser of the two arguments.

Therefore in terms of the quadratics $Q_1$, $Q_2$, and the given constants, we have the following 
lower bound for the first passage time density at time $s'$ 
\begin{equation}
g_2(s')\ge \frac{|a_1|}{2\pi}(\nu-\eta)\Delta\left(\eta \frac{\theta\sqrt{2\beta}}{2\sigma} \Delta\right)\left(\frac{8}{(s')^3}\right)\exp\left(-\frac{\beta}{4\sigma^2} \left[\frac{Q_1(s_1)\vee Q_1(s_2)}{Q_2(s_1)\wedge Q_2(s_2)}\right] \right)
\end{equation}
provided $s'>8$.  

Note that for $s'>8$, $\Delta(s')
> \frac{2}{3}(s'-2)>4$. 
Since $|a_1|=(x_0-\theta)\sqrt{2\beta}/\sigma>0$ we may write
\begin{eqnarray}\nonumber
g_2(s') &>& (\nu-\eta)\frac{(x_0-\theta)\sqrt{2\beta}}{2\pi\sigma}\left(\frac{8}{9} \frac{\theta\sqrt{2\beta}}{\sigma} \eta \right)\left(\frac{8}{(s')^3}\right)\exp\left(-\frac{\beta}{4\sigma^2} \left[\frac{Q_1(s_1)\vee Q_1(s_2)}{Q_2(s_1)\wedge Q_2(s_2)}\right] \right)\\
&=&(\nu-\eta)\eta\frac{64\beta\theta}{9\pi\sigma^2}\left(\frac{x_0-\theta}{(s')^3}\right)\exp\left(-\frac{\beta}{4\sigma^2} \left[\frac{Q_1(s_1)\vee Q_1(s_2)}{Q_2(s_1)\wedge Q_2(s_2)}\right] \right)
\end{eqnarray}

For any number $\alpha$, $0< \eta \le \alpha\le \nu<1$, we may write a point $s\in I'$ as $s=s_*+\alpha(s'-s_*)$ and $(s'-s)=(1-\alpha)(s'-s_*)$. Therefore we can write $Q_2(s)$ as $Q_2(s)=(1-\eta)s_*(s'-s_*)+\eta(1-\eta)(s'-s_*)^2$.
When $s'=8$ we have $\Delta=4$ and $s_*=4$ from Equations (\ref{eq:s*}, \ref{eq:Delta}).  Both $\Delta$ and $s_*$ are strictly increasing as functions of $s'$, so for $s'>8$ we have $\Delta>4$ and $s_*>4$.  
Therefore for $s'>8$ we have 
\begin{equation}Q_2(s_1)\wedge Q_2(s_2)=16(1-\nu^2).\end{equation} 

It remains to obtain an estimate on the quadratic $Q_1$.
\begin{lem}\label{lem:Q1}
Assume $x_0>\theta>0$.  Fix the constants $0<\nu<\eta<1$ and $s_1=s_*+\eta\Delta, s_2=s_*+\nu\Delta$  with $\Delta=(s'-s_*)$, and $s_*$ as in Equation (\ref{eq:s*}); let $Q_1$ be given by Equation (\ref{eq:Q1}) and let $s'>8\vee(1+x_0^2/\theta^2)$.  Then 
$$Q_1(s_1)\vee Q_1(s_2)\le \Delta^2\theta^2 s'.$$
\end{lem}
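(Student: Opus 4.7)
The plan is to exploit the convexity of $Q_1$. By Lemma \ref{lem:Q1concaveup} and the assumption $s'>8$, $Q_1(\cdot,s')$ is a convex quadratic in $s$; since $s_1,s_2\in[s_*,s']$, this gives $Q_1(s_1)\vee Q_1(s_2)\le\max\{Q_1(s_*,s'),\,Q_1(s',s')\}$, so the task reduces to computing $Q_1$ at the two endpoints $s_*$ and $s'$ and showing that the value at $s'$ dominates.

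The endpoint values become transparent if one abandons the expanded polynomial form and works from the compact expression $(\beta/(2\sigma^2))\,Q_1(s,s')=s'\,l_1(s)^2-2s\,l_1(s)\,a_2+s\,a_2^2$ already used to obtain $F_3$. By the defining identity $s_*=(a_2-a_1)/b_1$ one has $l_1(s_*)=a_1+b_1 s_*=a_2$, which collapses the expression at $s=s_*$ to $(s'-s_*)a_2^2=\Delta\,a_2^2$, giving $Q_1(s_*,s')=4\Delta(\theta\sqrt{s'+1}-x_0)^2$ after substituting $a_2^2=(2\beta/\sigma^2)(\theta\sqrt{s'+1}-x_0)^2$. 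At the other endpoint the same identity yields $l_1(s')-a_2=b_1(s'-s_*)=b_1\Delta$, hence $(\beta/(2\sigma^2))\,Q_1(s',s')=s'\,(b_1\Delta)^2$, and using $b_1^2=\theta^2\beta/(2\sigma^2)$ one obtains the clean formula $Q_1(s',s')=\Delta^2\theta^2 s'$, which already matches the claimed upper bound exactly. Only the comparison $Q_1(s_*,s')<Q_1(s',s')$ remains.

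For the comparison I would invoke the extra hypothesis $s'>1+x_0^2/\theta^2$, which yields $\theta\sqrt{s'+1}>x_0$, and combine it with the standing assumption $x_0>\theta$ to get the chain $0<\theta\sqrt{s'+1}-x_0<\theta(\sqrt{s'+1}-1)$. The key algebraic observation is the coincidence $(\sqrt{s'+1}-1)^2=s'+2-2\sqrt{s'+1}=\Delta$, so $(\theta\sqrt{s'+1}-x_0)^2<\theta^2\Delta$ and therefore $Q_1(s_*,s')<4\Delta^2\theta^2$. Combining with $Q_1(s',s')=\Delta^2\theta^2 s'>8\Delta^2\theta^2$ (from $s'>8$) gives $Q_1(s_*,s')<Q_1(s',s')$, so the convex maximum over $[s_*,s']$ is attained at $s'$, and $Q_1(s_1)\vee Q_1(s_2)\le Q_1(s',s')=\Delta^2\theta^2 s'$ as claimed.

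The main obstacle is purely algebraic rather than conceptual: the displayed polynomial formula for $Q_1$ is opaque and expanding it at the endpoints looks unpleasant, but reverting to the geometric variables $l_1$, $a_2$, $b_1$ reduces each endpoint evaluation to a couple of lines. The identity $(\sqrt{s'+1}-1)^2=\Delta$, a direct consequence of the definition of $s_*$ as the intersection of $L_1$ with $L_2$, is what ultimately makes the endpoint comparison clean and explains why the constant $\Delta^2\theta^2 s'$ in the statement is not arbitrary.
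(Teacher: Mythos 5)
Your proof is correct and follows the same overall strategy as the paper's: use the convexity from Lemma \ref{lem:Q1concaveup} to reduce to the endpoints $s_*$ and $s'$, evaluate $Q_1$ there (your values $Q_1(s_*)=4\Delta(\theta\sqrt{s'+1}-x_0)^2$ and $Q_1(s')=\Delta^2\theta^2 s'$ agree with the paper's, which obtains them via the parameterization $s=s_*+\alpha\Delta$ at $\alpha=0,1$), and then show $Q_1(s_*)<Q_1(s')$. The one place you genuinely diverge is that last comparison. The paper gets $\theta\sqrt{s'+1}-x_0<\tfrac{\theta}{2}\sqrt{\Delta s'}$ by a calculus argument: evaluating $\sqrt{s'+1}-\tfrac12\sqrt{\Delta s'}$ at $s'=8$ and checking the sign of its derivative. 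You instead use the exact identity $(\sqrt{s'+1}-1)^2=s'+2-2\sqrt{s'+1}=\Delta$, which with $x_0>\theta$ and the positivity of $\theta\sqrt{s'+1}-x_0$ (from $s'>1+x_0^2/\theta^2$) gives $Q_1(s_*)<4\Delta^2\theta^2<\Delta^2\theta^2 s'$ directly from $s'>8$. This is purely algebraic, shorter, and makes the role of the constant $\Delta^2\theta^2 s'$ more transparent; the paper's route establishes the slightly stronger intermediate bound $(\theta\sqrt{s'+1}-x_0)^2<\Delta s'\theta^2/4$ but at the cost of a monotonicity argument. Both are valid; yours is arguably the cleaner write-up of this step.
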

\begin{proof}
For any number $0\le \alpha \le 1$, we have a corresponding point $s\in I$ for which $s=s_*+\alpha(s'-s_*)$ and $(s'-s)=(1-\alpha)(s'-s_*)$.
As shown in Lemma \ref{lem:Q1concaveup}, $Q_1$ has constant positive second derivative with respect to $s$ when $s'>8$.  Hence its largest value for $\alpha\in[0,1]$ exceeds its largest value for $\alpha\in [\nu,\eta]$. From Equation (\ref{eq:Q1}) we obtain
\begin{eqnarray}
Q_1(s)\beta/\left(2\sigma^2\right)&=&(s'-s)l_1^2(s)+s(l_1(s)-a_2)\\
&=&s'l_1^2-sl_1^2+sl_1^2-2sa_2l_1+sa_2^2.
\end{eqnarray}
Note that by construction $l_1(s)=b_1(s-s_*)+a_2$.  Therefore
\begin{eqnarray}
Q_1(s)\beta/\left(2\sigma^2\right)&=&(s'-s)(b_1(s-s_*)+a_2)^2+s(b_1(s-s_*))^2\\
&=&(s'-s)(b_1^2(s-s_*)^2+2b_1a_2(s-s_*)+a_2^2)+sb_1^2(s-s_*)^2.
\end{eqnarray}
Write $s'-s=(1-\alpha)\Delta$ and $s-s_*=\alpha\Delta$. Then
\begin{eqnarray}\nonumber 
\frac{Q_1(s(\alpha))\beta}{2\sigma^2}&=&(1-\alpha)\Delta(b_1^2\alpha^2\Delta^2+2b_1a_2\alpha\Delta + a_2^2) + (s'-(1-\alpha)\Delta)(\alpha^2\Delta^2)b_1^2\\ \nonumber 
&=&\Delta(2(1-\alpha)\alpha b_1a_2\Delta + (1-\alpha)a_2^2+s'b_1^2\alpha^2\Delta)\\ \nonumber 
&=&\Delta\frac{2\beta}{\sigma^2}((1-\alpha)\alpha\theta(\theta\sqrt{s'+1}-x_0)+(1-\alpha)(\theta\sqrt{s'+1}-x_0)^2\\ 
&&+\alpha^2\Delta s'\theta^2/4 )\\
Q_1(s_*)&=&Q_1\left(s(\alpha)|_{\alpha=0}\right)= 4\Delta(\theta\sqrt{s'+1}-x_0)^2   \\
Q_1(s')&=&Q_1\left(s(\alpha)|_{\alpha=1}\right)=\Delta^2\theta^2 s'.
\end{eqnarray}
For $s'>8$ we have $\Delta>4$, as shown previously.  Therefore for $s'>8$ we have the following inequalities
\begin{eqnarray}
\sqrt{\frac{s'}{s'+1}}&<&1<\frac{\sqrt{\Delta}}{2}\\
0&>&\frac{1}{2\sqrt{s'+1}} - \frac{\sqrt{\Delta}}{4\sqrt{s'}}\\
1&>&3-2\sqrt{2}>\sqrt{s'+1}-\frac{1}{2}\sqrt{\Delta s'},
\end{eqnarray}
since $\sqrt{s'+1}-\frac{1}{2}\sqrt{\Delta s'}=3-2\sqrt{2}$ when $s'=8$ and 
$\frac{d}{ds}\left(\sqrt{s+1}-\frac{1}{2}\sqrt{\Delta s}\right) = \frac{1}{2\sqrt{s+1}} - \frac{\sqrt{\Delta}}{4\sqrt{s}}.$
Therefore, 
\begin{eqnarray}
x_0>\theta&>&\theta\sqrt{s'+1}-\frac{\theta}{2}\sqrt{\Delta s'}\\
\sqrt{\Delta s'}\theta/2 &>& \theta\sqrt{s'+1}-x_0 \label{ineq:foo}\\
\Delta s'\theta^2/4 &>&(\theta\sqrt{s'+1}-x_0)^2 \label{ineq:bar}\\
Q_1(s')&>&Q_1(s_*).
\end{eqnarray}
Inequality (\ref{ineq:bar}) follows from (\ref{ineq:foo}) because $(\theta\sqrt{s'+1}-x_0)>0$, by the assumption $s'>\left(1+x_0^2/\theta^2\right)$.
Therefore $$Q_1(s_1)\vee Q_1(s_2)\le Q_1(s')\le \Delta^2\theta^2 s'$$
as was to be shown.
\end{proof}

We are now able to state Lemma \ref{lem:main} which leads directly to Theorem \ref{thm:main}.
\begin{lem}\label{lem:main}
Let $x_0>\theta>0$ and let $s'$ be sufficiently large that 
\begin{equation}
s'>8\vee\left(1+\frac{x_0^2}{\theta^2}\right)\vee\left(\frac{8\sigma^2}{\beta\theta^2}\right).
\end{equation}
Then for the standard Brownian motion given by Equations (\ref{eq:dB}-\ref{eq:B0}), the conditional first passage time density $g_2$ defined by Equation (\ref{eq:g2-def}) for $B(s)$ to arrive at the boundary $b(s)$ given by Equation (\ref{eq:bdyfcn}) satisfies
\begin{equation}
g_2(s')> \frac{512}{9\pi}\left(\frac{x_0}{\theta}-1\right)\left(s'\right)^{-6}
\exp\left[-\frac{\beta}{32}\left(\frac{\theta}{\sigma}\right)^2 \left(s'\right)^3 \right].
\end{equation}
\end{lem}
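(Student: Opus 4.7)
The plan is to start from the last inequality displayed in the exposition,
\begin{equation*}
g_2(s') > (\nu-\eta)\eta\,\frac{64\beta\theta(x_0-\theta)}{9\pi\sigma^2(s')^3}\exp\!\left(-\frac{\beta}{4\sigma^2}\cdot\frac{Q_1(s_1)\vee Q_1(s_2)}{Q_2(s_1)\wedge Q_2(s_2)}\right),
\end{equation*}
which is already valid as soon as $s'>8\vee(1+x_0^2/\theta^2)$. What remains is to choose the two free parameters $\eta,\nu\in(0,1)$ and to invoke the third hypothesis $s'>8\sigma^2/(\beta\theta^2)$, which has not yet entered the argument.

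First I would control the exponential factor. The already noted identity $Q_2(s_1)\wedge Q_2(s_2)=16(1-\nu^2)$, combined with the bound $Q_1(s_1)\vee Q_1(s_2)\le \Delta^2\theta^2 s'$ from Lemma~\ref{lem:Q1} and the crude $\Delta<s'$ (immediate from (\ref{eq:Delta}), since $\sqrt{s'+1}>1$), forces the ratio $Q_1^{\vee}/Q_2^{\wedge}$ below $(s')^3\theta^2/[16(1-\nu^2)]$. Choosing $\nu=1/\sqrt{2}$ makes $1-\nu^2=1/2$ and yields the exponential factor as a lower bound of $\exp[-\frac{\beta}{32}(\theta/\sigma)^2(s')^3]$, matching the target exactly.

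Next I would pin down the prefactor. Taking $\eta=1/2$ (which respects $0<\eta<\nu<1$) gives $(\nu-\eta)\eta=(\sqrt{2}-1)/4$, and rewriting $x_0-\theta=\theta(x_0/\theta-1)$ casts the prefactor as $\frac{16(\sqrt{2}-1)}{9\pi}\cdot\frac{\beta\theta^2}{\sigma^2}\cdot\frac{x_0/\theta-1}{(s')^3}$. The third hypothesis supplies $\beta\theta^2/\sigma^2>8/s'$, converting this to $\frac{128(\sqrt{2}-1)}{9\pi}\cdot\frac{x_0/\theta-1}{(s')^4}$. A last appeal to $s'>8$, i.e.~$(s')^2>64$, together with the numerical fact $128(\sqrt{2}-1)\cdot 64>512$ (equivalently, $\sqrt{2}-1>1/16$), lets me trade one more factor of $(s')^{-2}$ for the clean constant, producing the advertised $\frac{512}{9\pi}(x_0/\theta-1)(s')^{-6}$.

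There is little conceptual obstacle at this stage; the substantive work has already been absorbed into Lemmas~\ref{lem:Q1concaveup} and \ref{lem:Q1}. The remaining task is really a bookkeeping exercise: distributing the powers of $s'$, $\beta$, $\sigma$, and $\theta$ between prefactor and exponent so that each of the three clauses of the hypothesis on $s'$ is spent where it is actually needed. The point to verify carefully is that the single choice $\eta=1/2,\nu=1/\sqrt{2}$ simultaneously collapses the exponent to the clean constant $1/32$, respects $0<\eta<\nu<1$, and leaves enough numerical slack in the prefactor for the $s'>8$ clause to upgrade $(s')^{-4}$ to $(s')^{-6}$ with coefficient at least $512/9\pi$.
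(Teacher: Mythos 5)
Your proposal is correct, and it diverges from the paper precisely at the one step that is genuinely yours: the treatment of the free parameters $\eta,\nu$. The paper optimizes $M(\eta,\nu)=\eta(\nu-\eta)\exp[-2B/(1-\nu^2)]$ over the whole region $0<\eta<\nu<1$, locating the critical point ($\nu_+=2\eta_+$, $\nu_+^2=B+1-\sqrt{B^2+B}$), introducing $C=\sqrt{B^2+2B}-B$, and then spending the hypothesis $s'>8\sigma^2/(\beta\theta^2)$ to force $B>1$ so that $1-C>1/(4B)$ and $\exp[-2B/C]\ge\exp[-4B]$; the resulting prefactor $A/(16B)$ already carries the extra $(s')^{-1}$ and the cancellation of $\beta\theta^2/\sigma^2$, and the passage to $(s')^{-6}$ is done via $\Delta^2<(s')^2$ rather than via $s'>8$. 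You instead fix $\eta=\tfrac12$, $\nu=1/\sqrt2$, which collapses the exponent to the same $\exp[-4B]$ (since $1-\nu^2=\tfrac12$ turns $2B/(1-\nu^2)$ into $4B$) without any calculus, and you spend the clause $s'>8\sigma^2/(\beta\theta^2)$ directly in the prefactor to trade $\beta\theta^2/\sigma^2$ for $8/s'$, then use $s'>8$ for the last factor of $(s')^{-2}$. Both routes land on the identical final bound; your fixed choice is simpler and in fact yields a slightly larger prefactor constant than the paper's $1/(16B)$ once $B>1$, while the paper's optimization documents where the true supremum of $M$ sits before discarding that precision. One cosmetic point: the relation $Q_2(s_1)\wedge Q_2(s_2)=16(1-\nu^2)$ that you (and the paper) quote as an identity is really the inequality $Q_2(s_1)\wedge Q_2(s_2)\ge 16(1-\nu^2)$ for $s'>8$ (equality holds only at $s'=8$, where $s_*=\Delta=4$); the direction is the one needed for the lower bound on $F_3$, so nothing breaks.
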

\begin{proof}
Lemmas \ref{lem:Q1concaveup} and \ref{lem:Q1} and the subsequent remarks establish that when $s'>8\vee\left(1+x_0^2/\theta^2\right)$, the density $g_2$ defined by Equation (\ref{eq:g2-def}) satisfies
\begin{equation}\label{eq:g2-temp}
g_2(s')\ge A\eta(\nu-\eta)\exp\left[-2 B/(1-\nu^2)\right]
\end{equation}
where the parameters $\eta$ and $\nu$ may be chosen arbitrarily subject to the constraints $0<\eta<\nu<1$, and the positive terms $A$ and $B$ (which depend on $s'$) are given by
\begin{eqnarray}
A(s')&=&\frac{64\beta\theta(x_0-\theta)}{9\pi\sigma^2\left(s'\right)^3}\\
B(s')&=&\frac{\beta\theta^2\Delta^2 s'}{128\sigma^2}.
\end{eqnarray}
Let us rewrite the right hand side of Equation (\ref{eq:g2-temp}) as $g_2=AM(\eta,\nu)$, introducing 
\begin{equation}
M(\eta,\nu)=\eta(\nu-\eta)\exp\left[-2 B/(1-\nu^2)\right].
\label{eq:M-eta-nu}
\end{equation}
Because the choice of $\eta$ and $\nu$ is arbitrary, within the constraints, $g_2$ must be bounded below by the supremum of $M$ over the region $0<\eta<\nu<1$.  Clearly $M$ is positive and differentiable within this region, and $M\to 0$ as $(\eta,\nu)$ approach any of the boundaries $\eta=0$, $\nu=1$, $\eta=\nu$. So the supremum will occur at a point $(\eta_+,\nu_+)$ in the interior of the constraint region.  Setting $\partial M/\partial \eta $ to zero to find the critical point gives $\nu_+=2\eta_+$.  Substituting back into (\ref{eq:M-eta-nu}) and differentiating gives $\nu_+=\sqrt{B+1-\sqrt{B^2+B}}$.  Substituting again to find the maximum value yields
\begin{equation}\label{eq:M-max}
M_{\mbox{max}}=\frac{1-C}{4}\exp\left[\frac{-2B}{C} \right]
\end{equation}
where we introduce
\begin{equation}
C=\sqrt{B^2+2B}-B.
\end{equation}
Note that for all $B>0$, we have $0<C<1$.  The value of $C$ increases monotonically and $C\to 1$ from below as $B\to\infty$.  Since $B$ is directly proportional to $s'$, we can find a value of $s'$ sufficiently large that $C$ exceeds any value less than 1.  For $B>1/4$, for example, we have $C>1/2$. Consequently we can bound below the exponential factor in (\ref{eq:M-max}): 
\begin{equation}\label{ineq:exponential-for-M}
\exp\left[\frac{-2B}{C} \right]\ge\exp\left[-4B\right]
\end{equation}
whenever $B>1/4$.  

The factor $(1-C)\to 0$ as $B\to\infty$. But provided $B\ge 1$ we have the following: 
\begin{eqnarray}
8B^2-8B+1&>&0\\
(4B^2+4B-1)^2&>&16B^2(B^2+2B) \label{ineq:foo2} \\
4B^2+4B-1 &>&4B\sqrt{B^2+2B} \label{ineq:bar2} \\
1-C=1+B-\sqrt{B^2+2B}&>&\frac{1}{4B}.\label{ineq:1-minus-C}
\end{eqnarray}
Inequality (\ref{ineq:bar2}) follows from (\ref{ineq:foo2}) because we clearly have $4B^2+4B-1>0$ provided $B\ge 1$.

Putting together (\ref{eq:M-max}), (\ref{ineq:exponential-for-M}) and (\ref{ineq:1-minus-C}), we may write
$M_{\mbox{max}}>\exp\left[-4B \right]/(16 B)$
provided $B>1$.  But requiring $B>1$ is equivalent to $s'>128\sigma^2/\left(\beta\Delta^2\theta^2\right)$.
As noted earlier, when $s'>8$ we have $\Delta>4$, so it is enough to require that 
$s'>8\sigma^2/\left(\beta\theta^2\right)$.
Therefore under the hypotheses of the lemma, $g_2(s')>A(s')\exp\left[-4B \right]/(16 B)$, from which it follows that 
$$
g_2(s')> \frac{512}{9\pi\Delta^2}\left(\frac{x_0}{\theta}-1\right)\left(s'\right)^{-4}
\exp\left[-\frac{\beta}{32}\left(\frac{\theta\Delta}{\sigma}\right)^2 s' \right].
$$
Since $\Delta(s') < s'$ for $s'>0$, and since $\beta>0$, we have 
$$
g_2(s')> \frac{512}{9\pi}\left(\frac{x_0}{\theta}-1\right)\left(s'\right)^{-6}
\exp\left[-\frac{\beta}{32}\left(\frac{\theta}{\sigma}\right)^2 \left(s'\right)^3 \right]
$$
as required.  This completes the proof of Lemma \ref{lem:main}.
\end{proof}

\section{Strictly positive lower bound for $\rho_X$}
\label{sec:OUP}

The proof of our main theorem follows from Lemma \ref{lem:main} and Corollary \ref{cor:doob-density}.  
\begin{proof}[Proof of Theorem \ref{thm:main} and Remark \ref{rem:supplement-to-main-theorem}]
Let $\{X(t),t\ge 0\}$ be an Ornstein-Uhlenbeck process satisfying Equations (\ref{eq:dX-OUP}-\ref{eq:X0}) and let $\beta$, $\sigma$ and $x_0$ be positive constants; let $\theta$ be a constant satisfying $x_0>\theta>0$.
To simplify notation we introduce the positive constants
\begin{eqnarray}
K&=&\frac{512}{9\pi}\left(\frac{x_0}{\theta}-1\right)\\
H&=&\frac{\beta}{32}\left(\frac{\theta}{\sigma}\right)^2.
\end{eqnarray}
Let $u=\frac{1}{2\beta}\log\left[1+\left\{8\vee\left(1+\frac{x_0^2}{\theta^2}\right)\vee
\left(\frac{8\sigma^2}{\beta\theta^2}\right) \right\}\right]$, as in Remark \ref{rem:supplement-to-main-theorem}.
Then by virtue of the time rescaling, \textit{cf.}~Equation (\ref{eq:s-from-t}), from 
Corollary \ref{cor:doob-density} and Lemma \ref{lem:main}, provided $t>u$ we may write
\begin{eqnarray}
\rho_X(t)&\ge&2\beta e^{2\beta t}g_2\left(e^{2\beta t}-1\right)\\
&>&2\beta e^{2\beta t} K \left(e^{2\beta t}-1\right)^{-6}\exp\left[-H\left(e^{2\beta t}-1\right)^3 \right]\\
&>&2\beta K e^{-10\beta t}\exp\left[-H e^{6\beta t} \right].
\end{eqnarray}
Let $p=1+H$ and $k=2\beta K$.  By hypothesis $s=\left(e^{2\beta t}-1\right)>8$ so $\beta t > \log 3 > 1$. 
It follows that $e^{-10\beta t}>\exp\left[-e^{6\beta t} \right]$.
Therefore $\rho_X(t)> 2\beta K \exp\left[-(1+H)e^{6\beta t}\right]=k\exp\left[-pe^{6\beta t} \right]$.  This concludes the proof of both Theorem \ref{thm:main} and Remark \ref{rem:supplement-to-main-theorem}.
\end{proof}

The form of the constants and the bound are consistent with the following \emph{heuristic} relationships between the parameters and the tail of the decay.  If $\beta$ is small, the initial condition is far from the threshold
and the noise is large ($x_0,\sigma\gg\theta$), it is reasonable that the tail of the FPT density should decay relatively slowly.  If $\beta$ is large, the initial condition is close to the threshold and the noise is small, it is reasonable to expect a faster decay of the density at long times.

\section{Discussion}
\label{sec:discuss}

\subsection{Relation to other asymptotic FPT density results}
\label{sec:discuss1}

Nobile \textit{et al.}~\cite{NobileRicciardiSacerdote1985JApplPr} derived an asymptotic expression for the first passage time distribution $\rho_{\theta}(t|x_0)$ for an Ornstein-Uhlenbeck process with limiting mean value $x_{\infty}$ to go from initial value $x_0$ to a boundary of height $\theta$ in the limit in which $\theta\gg x_0,x_{\infty}$.  They prove that 
\begin{equation}
\rho_{\theta}(t|x_0)=\frac{1}{\tau}e^{-t/\tau} + o\left(\frac{1}{\tau}e^{-t/\tau}\right), \mbox{ as } \theta\to\infty,
\end{equation}
where $\tau$ is the mean first passage time.   These same authors extended this result to a  broader class of diffusion processes in \cite{NobileRicciardiSacerdote:1985:JApplPrB}.  Similarly, Giorno \textit{et al.}~showed in \cite{GiornoNobileRicciardi1990AdvAppPr} that the FPT distribution showed good agreement with the form 
$$\rho(t)\approx Z(t)e^{-\lambda t}$$
for passage to a periodically varying boundary $\theta(t)$, in cases where $\theta(t)\gg x_0=x_{\infty}$.
The limit $\theta\gg x_0,x_{\infty}$ corresponds to the \emph{subthreshold} case, in contrast to the \emph{suprathreshold} case $x_0>\theta>x_{\infty}$ which concerns us here. 


Pauwels \cite{Pauwels1987JApplProb} gave conditions on the noise ($\sigma$) and drift ($b$) parameters guaranteeing that the stochastic differential equation $\{dX_t=\sigma(X_t)\,dB_t+b(X_t)\,dt,\, X_0=x_0\}$  should yield a first passage time density, $\rho(t,\theta|x_0)$, that is jointly continuous and $k$-fold differentiable in $t,\theta$ and $x_0$.  Pauwels' conditions are satisfied by the time homogeneous process considered above.   In \cite{Lehmann2002AdvApplProb} Lehmann used an integral equation approach to extend these results, giving conditions guaranteeing H\"{o}lder continuity of the FPT distribution. 

Several authors have obtained expressions for moments of the FPT distribution for the time homogeneous OU process to constant and some moving barriers, for instance \cite{Tuckwell+Wan:1984:JAppPr,Wan+Tuckwell:1982:JTheorNeurobiol}.  Lindner \cite{Lindner:2004:JStatPhys} calculated
moments of the FPT for both exponentially decaying and periodic driving terms similar to the situation described in Equation (\ref{eq:OUP-periodic}) below.  However, knowing the moments of the FPT distribution does not provide a strictly positive lower bound for the tail of the density.  

In 1977 Sato proved that the first passage time density $\rho_c(t)$ for a Wiener process from $B(0)=0$ to a square root boundary $c\sqrt{t+1}$ scales as
\begin{equation}
\rho_c(t)\sim \alpha t^{-p(c)-1}
\end{equation}
where $0<p(c)<1/2$, and it is assumed that $c>0$.  This provides a lower bound for the tail of the density (in the case considered) since 
if $\rho_c(t)/\left(\alpha t^{-p(c)-1}\right)\to 1$ as $t\to\infty$ then for any $\epsilon>0$, there is a $T$ such that for $t>T$ we have $\rho_c(t)>\alpha t^{-p(c)-1}/(1+\epsilon)$.  Sato's result does not provide a lower bound for the suprathreshold OUP, however.  The suprathreshold case corresponds instead to a boundary of the form  $-1+c\sqrt{t}$ after appropriate scaling, because in this case the moving boundary crosses the mean value of the Wiener process, and the expected first passage time is finite.  In the case considered by Sato, the mean first passage time is infinite, and the distance from the mean $\E[B]=0$ to the boundary grows monotonically.  

\subsection{Prospective application to neural synchronization}
\label{sec:discuss2}

Strictly positive lower bounds are also of interest in the case of OU processes with time varying forcing.  The timing of action potentials in nerve cells stimulated by fluctuating current injections are of significant interest in neuroscience \cite{kn:HMTC98,Mainen95,neurocomputing:ThomasEtAl:2003}.  There are many models available for theoretical studies of synchronization of nerve cells, but for models incorporating the effects of noise the LIF model remains among the most tractable, attracting sustained attention \cite{Shimokawa+Pakdaman+Takahata+Tanabe+Sato:2000:BiolCyb,Tateno2002,pla:TatenoJimbo:2000}.  For this case the model dynamics in Equation \ref{eq:dX-OUP} is extended to incorporate the drive through the time varying function $h(t)$, assumed to be bounded, measurable and of zero mean:
\begin{equation}\label{eq:OUP-periodic}
dX=(-\beta X + h(t))\,dt + \sigma\,dW.
\end{equation}
When the forcing function $h$ is periodic with period $T$, one is interested in the succession of first passage times $\{\tau_1,\tau_2,\cdots\}$ assuming the process is instantaneously reset to $x_0$ upon each encounter with the threshold. It can be shown that the sequence of phases of the boundary crossings relative to the periodic drive, $\phi_n = \tau_n\mod T$, then forms a Markov process on the circle $\mathcal{S}^1\equiv[0,1)$, with transition probabilities 
given by a map $\mathbf{K}:f_{k}\to f_{k+1}$ where 
\begin{equation}\label{eq:integral-eqn-K}
f_{k+1}(\phi)=\int_{\psi\in\mathcal{S}^1}K(\phi,\psi)f_k(\psi)\,d\psi.
\end{equation}  
Here $f_k(\phi)$ is the density function for the phase of the $k^{th}$ boundary crossing relative to the periodic drive.  This framework has been elaborated to study stochastic synchronization of systems analogous to the LIF model neuron driven by periodic injected current \cite{TatenoDoiSatoRicciardi1995JStatPhys} as well as LIF model neurons with constant drive and periodically varying $\theta(t)$ \cite{Shimokawa+Pakdaman+Takahata+Tanabe+Sato:2000:BiolCyb,Tateno1998JStatPhys,Tateno2002}.  In both types of models, numerical investigations suggest that regardless of initial conditions the phase distribution converges to a unique probability density on the circle \cite{Shimokawa+Pakdaman+Takahata+Tanabe+Sato:2000:BiolCyb,TatenoDoiSatoRicciardi1995JStatPhys}.  This situation contrasts with that of the \emph{deterministic} periodically forced LIF model neuron, in which a rich collection of resonances and $p:q$ mode locking are known to occur \cite{Bressloff99,siam:Keener+Hoppensteadt+Rinzel:1981,Pakdaman2001PRE,bmb:RescignoEtAl:1970}.
The existence of $p:q$ mode locked solutions ($q$ boundary crossings per $p$ stimulus periods) implies the existence of multiple invariant measures on the circle when $p>1$.  

Tateno \textit{et al.}~conjectured that 
the operator $\mathbf{K}$ for the stochastic periodically forced LIF model neuron has a unique invariant density and that the sequence of probability densities $\{\mathbf{K}^n f_0\}$ is asymptotically stable (\cite{TatenoDoiSatoRicciardi1995JStatPhys}, \S 5-6).  
One approach to proving this conjecture would be to exploit the following result (with $m=1$):

\begin{thm}[\textbf{Corollary 5.7.1} in \cite{LasotaMackey94}; see also \cite{TatenoDoiSatoRicciardi1995JStatPhys}, Equation (38)]
\textit{ Let $(X,\mathcal{A},\mu)$ be a measure space, $K:X\times X\to \mathbb{R}$ a stochastic kernel, and $\mathbf{K}$ the corresponding Markov operator (defined by the integral equation (\ref{eq:integral-eqn-K}) above).  Denote by $K_n$ the kernel corresponding to $\mathbf{K}^n$.  If, for some $m$,
\begin{equation}\label{eq:infimum}
\int_X \inf_y K_m(x,y)\,dx>0
\end{equation}
then $\{P^n\}$ is asympotically stable.}
\end{thm}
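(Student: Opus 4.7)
The plan is to exploit the hypothesis as a Doeblin-type minorization of the iterated kernel $K_m$, which will make $\mathbf{K}^m$ a strict $L^1$-contraction on the probability densities in $L^1(X,\mathcal{A},\mu)$; asymptotic stability of $\{\mathbf{K}^n\}$ then follows from the Banach fixed-point theorem, after a brief bookkeeping step to pass from multiples of $m$ to arbitrary $n$.

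First I would pick a measurable function $g(x)$ with $0 \le g(x) \le K_m(x,y)$ for every $y$ and $\alpha := \int_X g(x)\,d\mu(x) > 0$. (The hypothesis is taken to supply such a measurable minorant; an uncountable pointwise infimum need not be measurable in general, but in the settings of interest, e.g.\ the jointly continuous kernels produced by Pauwels' regularity results, the infimum is realized along a countable dense subset.) Assuming $\alpha < 1$ (otherwise $K_m$ does not depend on $y$ and the assertion is immediate), I would split
\begin{equation*}
K_m(x,y) = g(x) + R(x,y), \qquad R(x,y) \ge 0, \qquad \int_X R(x,y)\,d\mu(x) = 1-\alpha.
\end{equation*}
For any two densities $f_1, f_2 \in \mathcal{D} := \{f \in L^1(\mu) : f \ge 0,\ \int f\,d\mu = 1\}$, the $g(x)$ piece integrates to zero against $(f_1 - f_2)$, so
\begin{equation*}
(\mathbf{K}^m f_1 - \mathbf{K}^m f_2)(x) \;=\; \int_X R(x,y)\bigl(f_1(y)-f_2(y)\bigr)\,d\mu(y).
\end{equation*}
Taking absolute values, integrating in $x$, and applying Tonelli yields the contraction
\begin{equation*}
\|\mathbf{K}^m f_1 - \mathbf{K}^m f_2\|_{L^1} \;\le\; (1-\alpha)\,\|f_1 - f_2\|_{L^1}.
\end{equation*}

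Because $\mathcal{D}$ is a closed subset of $L^1$ and therefore a complete metric space in its own right, Banach's fixed-point theorem produces a unique $f^* \in \mathcal{D}$ with $\mathbf{K}^m f^* = f^*$ together with the geometric rate $\|\mathbf{K}^{mn} f_0 - f^*\|_{L^1} \le (1-\alpha)^n \|f_0 - f^*\|_{L^1}$ for every $f_0 \in \mathcal{D}$. Since $\mathbf{K}$ and $\mathbf{K}^m$ commute, $\mathbf{K} f^*$ is another fixed point of $\mathbf{K}^m$, so by uniqueness $\mathbf{K} f^* = f^*$. To lift convergence from the arithmetic progression $\{mn\}$ to all positive integers, write $n = qm + r$ with $0 \le r < m$; then
\begin{equation*}
\|\mathbf{K}^n f_0 - f^*\|_{L^1} \;\le\; (1-\alpha)^q\,\|\mathbf{K}^r f_0 - f^*\|_{L^1} \;\le\; 2(1-\alpha)^q \;\longrightarrow\; 0.
\end{equation*}

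The one genuine technical obstacle I anticipate is the measurability of $x \mapsto \inf_y K_m(x,y)$; an uncountable infimum of measurable functions is not automatically measurable, so the hypothesis must really be interpreted as asserting the existence of a measurable lower envelope of positive integral. Beyond that the proof is a textbook contraction argument, and I expect no further surprises — the remaining identification of $(1-\alpha)$ as the contraction constant and the reduction from $\mathbf{K}^m$ to $\mathbf{K}$ are both routine once the minorization is in place.
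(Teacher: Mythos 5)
Your argument is correct, but note that the paper itself offers no proof of this statement: it is imported verbatim as Corollary 5.7.1 of Lasota and Mackey, so there is nothing internal to compare against. What you have written is the standard Doeblin minorization proof, and it differs in flavor from the route taken in the cited source, which proceeds via the notion of a \emph{lower-bound function}: there one observes that $h(x)=\inf_y K_m(x,y)$ satisfies $\mathbf{K}^m f\ge h$ for every density $f$, so $h$ is a nontrivial lower-bound function for $\mathbf{K}^m$, and asymptotic stability follows from their Theorem 5.6.2 together with a lemma passing from $\mathbf{K}^m$ to $\mathbf{K}$. Your contraction argument is more self-contained and buys a geometric rate $(1-\alpha)^{\lfloor n/m\rfloor}$ that the lower-bound-function formulation does not state explicitly; the Lasota--Mackey route is more general in that it also covers situations where no splitting $K_m(x,y)=g(x)+R(x,y)$ with $g$ independent of $y$ is available. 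All the individual steps check out: the $g$-term annihilates $f_1-f_2$ because both are densities, Tonelli gives the $L^1$ contraction constant $1-\alpha$, the set of densities is closed in $L^1$ and invariant under $\mathbf{K}^m$ so Banach applies, commutation of $\mathbf{K}$ with $\mathbf{K}^m$ upgrades the fixed point of $\mathbf{K}^m$ to a fixed point of $\mathbf{K}$, and the division $n=qm+r$ handles general $n$. Your measurability caveat about $x\mapsto\inf_y K_m(x,y)$ is legitimate and is the right way to read the hypothesis (the source implicitly assumes a measurable infimum, as do the applications via jointly continuous kernels contemplated in Section 4.2). One cosmetic point: the statement's conclusion refers to $\{P^n\}$, which is the source's notation for what the paper calls $\{\mathbf{K}^n\}$; you silently make that identification, which is the intended reading.
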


Exhibiting a strictly positive lower bound analogous to (\ref{ineq:main-result}) for the periodically forced case would immediately establish the infimum criterion (\ref{eq:infimum}).  The criterion has been assumed to hold under various circumstances (\textit{cf.} Equation (6) in \cite{Doi+Inoue+Kumagai1998JStatPhys}; page 330 of \cite{Shimokawa+Pakdaman+Takahata+Tanabe+Sato:2000:BiolCyb}; and Equation (29) in \cite{Tateno1998JStatPhys}) but to our knowledge an explicit lower bound for the FPT density for a suprathreshold system has not been obtained either for the periodically forced LIF model with constant boundary nor for the time homogeneous LIF model with oscillating boundary.  The extension of Theorem \ref{thm:main} to one or another of these cases remains a topic for future research.  

\section{Acknowledgments}
This work was supported in part by NSF grants DMS-0720142 and DMS-1010434 in the mathematical biology program.  I am grateful for discussion and encouragement to the following: D. Calvetti, M. Denker, R. Galan, K. Kirkpatrick, K. Loparo, E. Somersalo, S. Szarek, and W. Woyczynski.  I thank an anonymous referee for providing valuable feedback.  I am grateful to the Oberlin College Library for research support.


%
%
%
%


\bibliographystyle{apt}
\bibliography{math,reliability,stoch_chem,PJT,Dicty,neuroscience,Respiration}





\end{document}